\DeclarePairedDelimiter{\ceil}{\lceil}{\rceil}
\newtheorem{thm}{Theorem}
\newtheorem{lemma}[thm]{Lemma}
\newcommand{\wt}{\operatorname{wt}}
\newcommand{\LL}{\mathbb{L}}
\newcommand{\CC}{\mathbb{C}}
\newcommand{\ZZ}{\mathbb{Z}}
\newcommand{\QQ}{\mathbb{Q}}
\newcommand{\HH}{\mathbf H}
\newcommand{\Var}{\operatorname{Var}}
\newcommand{\MHS}{\operatorname{MHS}}
\newcommand{\hsp}{\operatorname{hsp}}
\newcommand{\mon}{\operatorname{mon}}
\newcommand{\spec}{\operatorname{spectrum}}
\newcommand{\Sp}{\operatorname{Sp}}
\begin{document}

\title[Motivic Milnor fiber of a plane curve]{A recursive formula for the motivic Milnor fiber of a plane curve}

\author[{M.\ Gonz\'{a}lez Villa}]{Manuel Gonz\'{a}lez Villa}
\address{Department of Mathematics, University of Wisconsin-Madison, 480 Lincoln Drive, Madison, WI 53706, USA}
\email {villa@math.wisc.edu}

\author[G.\ Kennedy]{Gary Kennedy}
\address{Ohio State University at Mansfield, 1760 University Drive,
Mansfield, Ohio 44906, USA}
\email{kennedy@math.ohio-state.edu}

\author[L.\ McEwan]{Lee J. McEwan}
\address{Ohio State University at Mansfield, 1760 University Drive,
Mansfield, Ohio 44906, USA}
\email{mcewan@math.ohio-state.edu}

\begin{abstract}
We find a recursive formula for the
motivic Milnor fiber of an irreducible plane curve, 
using the notions of a truncation and
derived curve.
We then apply natural transformations
to obtain a similar recursion for the Hodge-theoretic spectrum.
\end{abstract}

\maketitle


\section{Introduction}\label{intro}
In this paper we develop a recursive formula for the
motivic Milnor fiber of a singular point of a plane curve,
expressed in terms of the essential exponents in
its Puiseux expansion. This subject has been treated in
\cite{Guibert}, and later in \cite{GPGV}, but the novel feature here is that we develop
a recursion at the level of motives,
adapting the topological techniques of \cite{Kennedy-McEwan}. Alternative approaches to the computation of the motivic Milnor fibre of a singular point of a plane curve proposed in the literature are to use an embedded or log resolution of the singular point \cite{Denef-LoeserBarca}, the Newton process \cite{ACNLM-AMS}, or splicing \cite{Cauwbergs}.
\par
After developing the recursion, we apply natural transformations
to obtain similar recursions for the Hodge-theoretic spectrum
and the topological monodromy.
\par
\emph{Acknowledgements.}
This paper benefited greatly from extensive conversations with Mirel Caib\u{a}r;
we also received helpful advice from Fran\c{c}ois Loeser.
The first author was supported by 
Spanish Ministerio de Ciencia y Tecnolog\'{\i}a Grant no. MTM2013-45710-C2-02-P.
The second author was supported by a Collaboration
Grant from the Simons Foundation, and worked
on this project while in residence at the Fields Institute.

\section{The theorem} \label{thetheorem}
Consider an irreducible complex plane curve $C$ with Puiseux expansion
\begin{equation*}
\zeta=\sum c_{\mu}x^{\mu}
\end{equation*}
in which the essential exponents are
$\mu_1 < \mu_2 < \dots < \mu_e$.
Let $\mu_1 = n/m$, where $m$ and $n$ are relatively prime. 
If necessary we will change coordinates so that 
none of the exponents
are integers and so that $c_{\mu_1}=1$.
Then
$C$ is defined by the vanishing of a function
which can be written both as a product and a sum:
\begin{equation} \label{fproduct}
f(x,y) =\prod (y-\zeta) = (y^m - x^n)^{d'} + \cdots.
\end{equation}
Here $d'$ is some positive integer, and the unnamed terms
have higher order with respect to the weighting $\wt(x)=m$, $\wt(y)=n$;
the product is taken over all possible conjugates,
the number of which is
$d=m d'$.
\par
As in \cite{Kennedy-McEwan}, we define two associated curves.
The  \emph{truncation} $C_1$ is the curve with Puiseux expansion 
$\zeta_1 = x^{\mu_1}$ and defining function $f_1(x,y) = y^m - x^n$; it has $m$ conjugates.
The \emph{derived curve} $C'$ has Puiseux expansion 
\begin{equation} \label{derivedpuiseux}
\zeta'= 
\sum_{\mu>\mu_1} c_{\mu}x^{\mu'},
\end{equation}
where $\mu' = m(\mu - \mu_1 + n)$.
It has $d'$ conjugates, and its defining function is
\begin{equation}\label{fprimeproduct}
f'(x,y)=\prod (y-\zeta')=y^{d'}+\dots,
\end{equation}
a product using each conjugate once.
Its essential exponents are 
$$\mu_1',  \mu_2', \dots, \mu_{e-1}'$$ where
$$\mu_i' = m(\mu_{i+1} - \mu_1 + n).$$ 
\par
Denote by ${\hat\mu}$ the inverse limit of the groups of roots of unity
(not to be confused with the notation for essential exponents).
Following Denef and Loeser \cite{Denef-LoeserBarca},
let 
$K_0^{\hat\mu}(\Var_{\CC})$
denote the 
monodromic Grothendieck ring of varieties with good ${\hat\mu}$-action,
and let 
$\mathcal{M}_{\CC}^{\hat\mu}$
be the localization obtained by inverting $\LL$,
the class of the affine line.
The class of a variety $X$ in this ring will be denoted $[X]$
and will be called a \emph{motive}.
\par
In the space $\mathcal{L}_N(\mathbb{C}^2_{x,y})_\mathbf{0}$ of $N$-jets based at the origin, let 
$\mathcal{X}_{N,1}(f)$ be the subspace consisting of jets
$\varphi$ for which
$$
(f \circ \varphi)(t)=t^N+\text{ higher order terms}.
$$
The \emph{local motivic zeta function} of $f$ at the origin is
$$
Z(f) = \sum_{N=1}^{\infty}[\mathcal{X}_{N,1}(f)]\LL^{-2N}T^N.
$$
The \emph{motivic Milnor fiber} of $f$ at the origin
is the limit
$$
S(f) = -\lim_{T\to\infty} Z(f).
$$
(In fact $Z(f)$ is known to be a rational function; thus we are evaluating at infinity.)
In Section \ref{proofof1} we prove the following recursive formula.
\begin{thm} \label{curvetheorem}
$S(f) = S((f_1)^{d'}) + S(f')-[\mu_{d'}]$.
\end{thm}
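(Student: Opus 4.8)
The plan is to establish the identity at the level of the motivic zeta functions $Z(f)$, $Z((f_1)^{d'})$ and $Z(f')$ by a matching stratification of the jet spaces, and then to apply the additive operation $-\lim_{T\to\infty}$. The combinatorial backbone, taken from \cite{Kennedy-McEwan}, is that the $d=md'$ conjugate branches $y=\zeta_j(x)$ fall into $m$ \emph{packets} of $d'$ branches each: two conjugates belong to the same packet exactly when their coefficients of $x^{\mu_1}$ agree, these coefficients being the $m$-th roots of unity because $c_{\mu_1}=1$. For an $N$-jet $\varphi=(x(t),y(t))$ with $x(t)\not\equiv 0$ we set $a=\ord_t x(t)$ and call the \emph{contact} of $\varphi$ the largest $t$-order to which $y(t)$ agrees with some $\zeta_j(x(t))$; the jets with $x(t)\equiv 0$, on which $f\circ\varphi=y(t)^d$, and the auxiliary jets with $x(t)\not\equiv 0$ but $m\nmid a$, are set aside as separate strata.

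A jet of contact at most $an/m$ cannot distinguish $f$ from its leading weighted-homogeneous form $(y^m-x^n)^{d'}=(f_1)^{d'}$, since the unnamed higher-weight terms of $f$ in \eqref{fproduct} only affect $f\circ\varphi$ at strictly larger $t$-order; hence the ``low-contact'' parts of $Z(f)$ and $Z((f_1)^{d'})$ coincide, and it remains to compare the ``high-contact'' parts (contact $>an/m$), for which one checks that $\varphi$ follows a single packet $P_k$. Choosing a representative $\zeta_{j_0}$, $j_0\in P_k$, and writing $y(t)=\zeta_{j_0}(x(t))+\psi(t)$ with $\ord_t\psi$ large, one has the purely algebraic factorization
$$
f\circ\varphi\;=\;\Bigl(\prod_{j\notin P_k}\bigl(\psi+\zeta_{j_0}-\zeta_j\bigr)\Bigr)\cdot\Bigl(\prod_{j\in P_k}\bigl(\psi-(\zeta_j-\zeta_{j_0})\bigr)\Bigr),
$$
in which the first factor is an explicit nonzero constant times a power of $x(t)$ times a unit, while the second factor is, by the very definition \eqref{derivedpuiseux} of the derived curve (the exponent rescaling $\mu\mapsto m(\mu-\mu_1+n)$), $f'$ evaluated along a jet manufactured from $\psi$. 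The aim is to promote this to a constructible, $\LL$-twisted bijection between the high-contact part of the jet space of $f$ and the jet space of $f'$ --- the motivic analogue of the change-of-variables formula for arc integrals --- identifying the bulk of the high-contact part of $Z(f)$ with $Z(f')$, the residual boundary strata being matched directly.

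Running the same substitution for $(f_1)^{d'}$, whose packet $P_k$ now consists of the single branch $\omega_k x^{n/m}$ with multiplicity $d'$, collapses the second factor to $\psi^{d'}$; thus the high-contact part of $Z((f_1)^{d'})$ is identified with $Z(y^{d'})$, and $-\lim_{T\to\infty}Z(y^{d'})=S(y^{d'})=[\mu_{d'}]$. Combining, the low-contact parts cancel in $S(f)-S((f_1)^{d'})$, the high-contact part of $f$ yields $S(f')$, and that of $(f_1)^{d'}$ yields $[\mu_{d'}]$, giving $S(f)=S((f_1)^{d'})+S(f')-[\mu_{d'}]$. I expect the main obstacle to be exactly the construction and verification of the $\LL$-twisted bijection induced by $\varphi\mapsto\psi$: one must follow the $\hat\mu$-action (the monodromy) and every power of $\LL$ through the exponent rescaling and through the constant-times-power-of-$x$ factor contributed by the other $m-1$ packets, check that the normalization $\LL^{-2N}T^N$ is carried to $\LL^{-2N'}T^{N'}$, and confirm that the low-dimensional boundary strata (those with $x(t)\equiv 0$ or $m\nmid a$, and their counterparts for $f'$ and $(f_1)^{d'}$) leave no surplus. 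This is the motivic incarnation of the topological surgery of \cite{Kennedy-McEwan} that grafts the Milnor fibre of $f'$ onto that of the truncation, and the delicate point is that the motivic version must reproduce precisely these three terms. Finally one must justify treating $Z(f)$ as a rational function of $T$ before evaluating at $T=\infty$; and one notes that, $C'$ having $e-1$ essential exponents, iterating the formula produces a closed form for $S(f)$.
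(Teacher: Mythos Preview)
Your overall strategy---splitting jets into a low-contact stratum on which $f$ and $(f_1)^{d'}$ agree, and a high-contact stratum to be matched with jets of the derived curve $f'$ (respectively $y^{d'}$)---is precisely the paper's strategy, and is indeed the motivic incarnation of the surgery in \cite{Kennedy-McEwan}. Where the paper differs is in the execution of the high-contact comparison, and it neatly sidesteps the very obstacle you flag. Rather than build a direct $\LL$-twisted bijection $\varphi\mapsto\psi$ between jet spaces of $f$ and of $f'$ (with its bookkeeping of $\LL$-powers, $N$-shifts, $\hat\mu$-equivariance, and the constant-times-power-of-$x$ factor from the other $m-1$ packets), the paper introduces two maps into a common auxiliary plane $\CC^2_{v,w}$:
\[
\pi(v,w)=(v^m,\,v^n(1+w)),\qquad \pi'(v,w)=(v,\,v^{mn}w).
\]
The high-contact jets for $f$ are exactly the $\pi$-pushforwards, so their motive equals $[\mathcal{X}_{N,1}(f\circ\pi)]$; likewise the relevant jets for $f'$ are the $\pi'$-pushforwards, with motive $[\mathcal{X}_{N,1}(f'\circ\pi')]$. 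The key lemma is then that $f\circ\pi$ and $f'\circ\pi'$ differ by a \emph{unit} in the local ring of $\CC^2_{v,w}$ at the origin, whence $[\mathcal{X}_{N,1}(f\circ\pi)]=[\mathcal{X}_{N,1}(f'\circ\pi')]$ for every $N$---same $N$, same ambient jet space, no twist. The same device matches $(f_1)^{d'}\circ\pi$ with $y^{d'}\circ\pi'$ (both equal $v^{mnd'}w^{d'}$ times a unit). All the exponent rescaling $\mu\mapsto m(\mu-\mu_1+n)$ and the packet combinatorics are absorbed into the definitions of $\pi$ and $\pi'$; your packet choice is implicitly the choice of $m$-th root of $x(t)$ when constructing the unique $\pi$-preimage. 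One further point: the high-contact part of $f$ corresponds not to all of $\mathcal{X}_{N,1}(f')$ but only to its $\pi'$-pushforward part; the paper handles the complement by a second low-contact comparison, this time between $f'$ and $y^{d'}$ via $\pi'$. Since the resulting identity holds coefficient-by-coefficient in $T$, no separate rationality argument is needed before passing to $T\to\infty$.
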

\par
In this formula $[\mu_{d'}]$ denotes the motive of roots of unity.
To make the base case of this recursion more explicit,
we introduce the following notation:
$[(f_1)^{d'}-1]$ represents the motive associated to the variety
$
\{ (x,y) \in \mathbb{C}^2 | (f_1(x,y))^{d'}=1 \}.
$
\begin{thm} \label{basecase}
$S((f_1)^{d'}) = [(f_1)^{d'}-1] - [\mu_{d'}](\LL-1)$.
\end{thm}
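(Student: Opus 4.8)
The plan is to derive Theorem~\ref{basecase} from the Denef--Loeser formula expressing the motivic Milnor fiber through an embedded resolution, using that $g:=(f_1)^{d'}=(y^m-x^n)^{d'}$ is weighted homogeneous of degree $mnd'$ for $\wt(x)=m$, $\wt(y)=n$.

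First I would take $\pi\colon Y\to\CC^2$ to be the minimal embedded resolution of the germ at $\mathbf 0$ of $\{f_1=0\}$. As $\gcd(m,n)=1$, this curve is a single branch with one Puiseux pair, so $\pi^{-1}(\mathbf 0)$ is a Hirzebruch--Jung chain of rational curves $E_1,\dots,E_r$ (with combinatorics governed by the continued fraction of $n/m$) containing exactly one rupture divisor $E_\rho$, and the strict transform $\widetilde{C_1}$ of $\{f_1=0\}$ meets the exceptional locus transversally in a single point $p\in E_\rho$. On this $\pi$ I record, for each component $E_i$ (and for $\widetilde{C_1}$) of the reduced total transform of $g^{-1}(0)$, the multiplicity $N_i$ of $g\circ\pi$ along it: it is $d'$ times the classical multiplicity of $f_1\circ\pi$, so in particular $N_\rho=mnd'$, while the multiplicity of $g\circ\pi$ along $\widetilde{C_1}$ is $d'$. (The log-discrepancy numbers do not enter the formula for $S$.) Then I would write
$$S(g)=\sum_{\emptyset\ne I}(1-\LL)^{|I|-1}\bigl[\widetilde{E_I^\circ}\cap\pi^{-1}(\mathbf 0)\bigr],$$
the sum over nonempty subsets $I$ of the index set $\{E_1,\dots,E_r,\widetilde{C_1}\}$, where $E_I^\circ=\bigcap_{i\in I}E_i\setminus\bigcup_{j\notin I}E_j$ and $\widetilde{E_I^\circ}\to E_I^\circ$ is the canonical $\mu_{m_I}$-cover with $m_I=\gcd\{N_i:i\in I\}$.

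The heart of the matter is to evaluate this sum, which I would organize so as to peel off a global part and a local correction at $p$. The contributions along the tails of the Hirzebruch--Jung chain --- the $|I|=1$ covers of the affine lines and punctured affine lines $E_i^\circ$, together with the $(1-\LL)$-weighted $|I|=2$ covers of the double points $E_i\cap E_{i+1}$ --- telescope, exactly as in A'Campo's reconstruction of the topological Milnor fiber of a weighted homogeneous polynomial; combined with the cover of $E_\rho^\circ=\mathbb{P}^1\setminus\{\text{three points}\}$ they assemble, $\hat\mu$-equivariantly, into the class $[(f_1)^{d'}-1]$ of the global fiber $\{g=1\}=\bigsqcup_{c\in\mu_{d'}}\{f_1=c\}$ with its monodromy. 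The only term that does not participate in this cancellation is the $|I|=2$ term at $p$ for the pair $\{E_\rho,\widetilde{C_1}\}$: its cover of the point $p$ has degree $\gcd(N_\rho,d')=\gcd(mnd',d')=d'$ and thus equals $[\mu_{d'}]$ with its translation action, weighted by $(1-\LL)$, i.e.\ precisely $-[\mu_{d'}](\LL-1)$. Putting these together gives $S(g)=[(f_1)^{d'}-1]-[\mu_{d'}](\LL-1)$.

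I expect the chain telescoping to be the principal obstacle: one must make rigorous the motivic A'Campo identity that reassembles the cyclic covers of the strata $E_i^\circ$, glued along the double points of the configuration, into $\{g=1\}$ with the correct $\hat\mu$-action --- whereas the correction term is the single transparent term identified above. If the resolution bookkeeping proves unwieldy, the fallback is a direct jet-space computation of $Z(g)$: an $N$-jet $\varphi$ lies in $\mathcal X_{N,1}(g)$ if and only if $d'\mid N$, and, writing $N=d'k$, if and only if $(f_1\circ\varphi)(t)=ct^k+(\text{higher order})$ for some $c\in\mu_{d'}$; stratifying by $c$ exhibits $\mathcal X_{N,1}(g)$ as induced from the $\mu_k\subset\mu_N$-stable locus $\{c=1\}$, which under truncation of jets fibers in affine spaces over $\mathcal X_{k,1}(f_1)$. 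This reduces $Z(g)$, hence $S(g)$, to the case $d'=1$ together with an explicit $\mu_{d'}$-induction, leaving one to compute the motivic Milnor fiber of $y^m-x^n$ --- by the same resolution or by citation --- which equals $[(f_1)-1]-(\LL-1)$.
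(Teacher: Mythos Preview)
Your outline is sound, but the route is genuinely different from the paper's. You go through an embedded resolution and the Denef--Loeser formula, arguing that the Hirzebruch--Jung chain contributions telescope into the class $[(f_1)^{d'}-1]$ of the global fiber while the lone double point $E_\rho\cap\widetilde{C_1}$ supplies the correction $(1-\LL)[\mu_{d'}]$. The paper never touches a resolution. Instead it reuses the map $\pi(v,w)=(v^m,v^n(1+w))$ already introduced for Theorem~\ref{curvetheorem} and splits $\mathcal{X}_{N,1}((f_1)^{d'})$ into the pushforward piece $\mathcal{X}^{\pi}_{N,1}((f_1)^{d'})$ and its complement. The pushforward piece is handled by noting that $(f_1)^{d'}\circ\pi=(v^{mn}(mw+\dots))^{d'}$ and summing an explicit product of two geometric series in $p,q$, which yields $-[\mu_{d'}](\LL-1)$ in the limit. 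The complement is analyzed by cases according to whether $N$ is divisible by $md'$, by $nd'$, or by $mnd'$; the first two cases organize into geometric series whose limits vanish, and the third produces $[(f_1)^{d'}-1]$ from a single geometric series in $p$. Your approach is more structural and explains \emph{why} the global fiber appears, but the equivariant motivic telescoping you flag is indeed the hard step and is not a one-liner; the paper's argument is entirely elementary, short, and recycles machinery already built, at the cost of being less conceptual. Your fallback (stratifying $\mathcal{X}_{N,1}(g)$ over $c\in\mu_{d'}$ and inducing up from $\mathcal{X}_{k,1}(f_1)$) is yet another decomposition, also distinct from the paper's $\pi$-pushforward split, and would still leave you to establish the $d'=1$ case $S(y^m-x^n)=[f_1-1]-(\LL-1)$ separately.
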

This formula is implicit in the results of Section 3 of Guibert \cite{Guibert},
but we prefer to give
a short self-contained proof in Section \ref{basecaseproof} below.
Combining Theorems \ref{curvetheorem} and \ref{basecase},
we obtain the following formula:
\begin{equation} \label{combined}
S(f) = [(f_1)^{d'}-1] - [\mu_{d'}]\LL  + S(f').
\end{equation}
As a consequence of (\ref{combined}), the motivic Milnor fiber of $f$ at the origin $S(f)$ is determined by the essential exponents of the complex plane curve $C$.


\section{Proof of Theorem \ref{curvetheorem}} \label{proofof1}
We begin with two basic properties of the motivic Milnor fiber.

\begin{lemma} \label{yalone}
For the coordinate function $y$, we have $S(y^{d'})=[\mu_{d'}].$
\end{lemma}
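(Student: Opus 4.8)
The plan is to compute $S(y^{d'})$ directly from the definition, since the function $g(x,y) = y^{d'}$ is a monomial and its jet spaces are easy to describe explicitly. Let me write out the jet space $\mathcal{X}_{N,1}(y^{d'})$. A jet $\varphi(t) = (x(t), y(t))$ based at the origin has $x(t) = a_1 t + \cdots$ and $y(t) = b_1 t + \cdots$, and we need $(y(t))^{d'} = t^N + \text{h.o.t.}$ This forces $N$ to be a multiple of $d'$, say $N = d' k$, and then $y(t) = t^k \cdot u(t)$ where $u(0)^{d'} = 1$, i.e. $u(0)$ is a $d'$-th root of unity, and the $\hat\mu$-action records precisely this root of unity. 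So I expect $\mathcal{X}_{N,1}(y^{d'})$ to be empty unless $d' \mid N$, and when $N = d'k$ it should be a product: $\mu_{d'}$ (the $d'$-th roots of unity, carrying the monodromy action) times an affine space recording the free choices in $x(t)$ and in the higher coefficients of $y(t)$.

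**Carrying out the jet count.**
Working in $\mathcal{L}_N(\mathbb{C}^2)_{\mathbf 0}$, the coordinate $x(t)$ is completely free: it contributes $\mathbb{L}^N$ (coefficients of $t^1, \dots, t^N$). For $y(t) = b_k t^k + b_{k+1}t^{k+1} + \cdots$ with $b_k \ne 0$ a $d'$-th root of unity times the appropriate normalization: expanding $(y(t))^{d'}$, the coefficient of $t^{d'k} = t^N$ is $b_k^{d'}$, which must equal $1$, so $b_k \in \mu_{d'}$; the coefficients $b_{k+1}, \dots, b_N$ are free (each higher coefficient of $(y(t))^{d'}$ can be solved for by adjusting them, and there are no further constraints up to order $N$), contributing $\mathbb{L}^{N-k}$. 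Thus $[\mathcal{X}_{N,1}(y^{d'})] = [\mu_{d'}] \cdot \mathbb{L}^N \cdot \mathbb{L}^{N-k} = [\mu_{d'}]\,\mathbb{L}^{2N - k}$, with $k = N/d'$, and the $\hat\mu$-action on $\mu_{d'}$ is the standard one. Therefore
\begin{equation*}
Z(y^{d'}) = \sum_{k=1}^{\infty} [\mu_{d'}]\,\mathbb{L}^{2d'k - k}\,\mathbb{L}^{-2d'k}\,T^{d'k} = [\mu_{d'}] \sum_{k=1}^{\infty} (\mathbb{L}^{-1}T^{d'})^k = [\mu_{d'}]\,\frac{\mathbb{L}^{-1}T^{d'}}{1 - \mathbb{L}^{-1}T^{d'}}.
\end{equation*}

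**Taking the limit.**
Now $S(y^{d'}) = -\lim_{T\to\infty} Z(y^{d'})$. The rational function $\dfrac{\mathbb{L}^{-1}T^{d'}}{1-\mathbb{L}^{-1}T^{d'}}$ in the variable $T$ evaluates at $T = \infty$ to $-1$ (divide numerator and denominator by $T^{d'}$: $\dfrac{\mathbb{L}^{-1}}{T^{-d'} - \mathbb{L}^{-1} } \to \dfrac{\mathbb{L}^{-1}}{-\mathbb{L}^{-1}} = -1$). Hence $S(y^{d'}) = -(-[\mu_{d'}]) = [\mu_{d'}]$, as claimed.

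**Main obstacle.**
The one point that needs care — and which I would want to check rather than wave at — is the claim that for $y(t) = b_k t^k + \cdots$ with $b_k^{d'} = 1$, the higher coefficients $b_{k+1},\dots,b_N$ of $y$ are genuinely free, i.e. that the map sending these to the coefficients of $t^{d'k+1},\dots,t^N$ in $(y(t))^{d'}$ is a bijection (as a morphism of varieties, compatibly with the $\hat\mu$-structure). This is because the coefficient of $t^{d'k + j}$ in $(y(t))^{d'}$ equals $d'\, b_k^{d'-1} b_{k+j} + (\text{polynomial in } b_k, \dots, b_{k+j-1})$, and $d'\,b_k^{d'-1}$ is a unit, so one solves triangularly for $b_{k+j}$ in terms of the target coefficient; this is an isomorphism of affine spaces. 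I would spell out this triangular change of variables carefully, noting that it is equivariant for the standard scaling action $t \mapsto \xi t$ that defines the $\hat\mu$-structure, so that the factor carrying the monodromy is exactly $\mu_{d'}$ with its tautological action.
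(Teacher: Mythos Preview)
Your computation is correct and is exactly the ``direct calculation'' the paper alludes to without writing out: describe $\mathcal{X}_{N,1}(y^{d'})$ explicitly, sum the resulting geometric series, and evaluate at $T=\infty$.

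One small remark: your ``main obstacle'' paragraph is addressing a non-issue. Since $d'k = N$, the lowest term of $(y(t))^{d'}$ beyond the leading one already has degree $N+1$, so modulo $t^{N+1}$ we simply have $(y(t))^{d'} \equiv b_k^{d'}\,t^N$. There are no coefficients of $t^{d'k+1},\dots,t^N$ to match (that range is empty), hence no triangular system to solve; the coefficients $b_{k+1},\dots,b_N$ are free because they impose no conditions at all, not because of an invertible change of variables. You can safely delete that paragraph.
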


\begin{lemma} \label{unitinlr}
If the quotient $g/h$ is a
unit in the local ring at the origin, then 
$[\mathcal{X}_{N, 1}(g)]=[\mathcal{X}_{N, 1}(h)]$ for each $N$.
\end{lemma}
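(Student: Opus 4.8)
The plan is to realize $\mathcal{X}_{N,1}(g)$ and $\mathcal{X}_{N,1}(h)$ as two fibers of one and the same morphism to $\CC^*$, and then to exchange those fibers by a reparametrization of the jet variable. Write $g = uh$ with $u$ a unit in the local ring at $\mathbf{0}$, and set $c = u(\mathbf{0}) \in \CC^*$. The first, routine, step is to note that for every based jet $\varphi \in \mathcal{L}_N(\CC^2_{x,y})_{\mathbf{0}}$ the series $u\circ\varphi$ is a unit in $\CC[t]/(t^{N+1})$ with constant term $c$; since $g\circ\varphi = (u\circ\varphi)(h\circ\varphi)$, this gives $\ord_t(g\circ\varphi) = \ord_t(h\circ\varphi)$ and, when this common order is at most $N$, also $\ac(g\circ\varphi) = c\,\ac(h\circ\varphi)$. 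All of this depends only on the $N$-jets of $g$ and $h$, so the fact that $u$ is merely a germ causes no trouble: because $\varphi(0) = \mathbf{0}$, the composite $u\circ\varphi$ is a well-defined power series with value $c$ at $t = 0$.

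Next I would introduce the auxiliary variety $\mathcal{X}_N(h)$, the locus of based $N$-jets with $\ord_t(h\circ\varphi) = N$, together with the morphism $\alpha\colon \mathcal{X}_N(h) \to \CC^*$ given by $\varphi \mapsto \ac(h\circ\varphi)$ (the coefficient of $t^N$, a polynomial in the jet coordinates, which is nonzero on $\mathcal{X}_N(h)$). By the previous paragraph, $\mathcal{X}_{N,1}(h) = \alpha^{-1}(1)$ and $\mathcal{X}_{N,1}(g) = \alpha^{-1}(c^{-1})$. Moreover $\mathcal{X}_N(h)$ carries the usual action of $\mu_N$, namely $(\xi\cdot\varphi)(t) = \varphi(\xi t)$, which restricts to the good $\mu_N$-actions entering the definition of the classes $[\mathcal{X}_{N,1}(g)]$ and $[\mathcal{X}_{N,1}(h)]$ in $K_0^{\hat\mu}(\Var_{\CC})$.

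The \emph{key step} is to write down a $\mu_N$-equivariant isomorphism $\alpha^{-1}(c^{-1}) \xrightarrow{\ \sim\ } \alpha^{-1}(1)$. Choose $\lambda \in \CC^*$ with $\lambda^N = c$ (possible since $\CC$ is algebraically closed) and define $R_\lambda(\varphi)(t) = \varphi(\lambda t)$; on jet coordinates this is the invertible linear map $(a_i, b_i) \mapsto (\lambda^i a_i, \lambda^i b_i)$, hence an automorphism of $\mathcal{L}_N(\CC^2_{x,y})_{\mathbf{0}}$. Since $h\circ R_\lambda(\varphi)(t) = (h\circ\varphi)(\lambda t)$, the order is preserved and $\alpha(R_\lambda\varphi) = \lambda^N\alpha(\varphi) = c\,\alpha(\varphi)$, so $R_\lambda$ carries $\alpha^{-1}(c^{-1})$ isomorphically onto $\alpha^{-1}(1)$; and because the two substitutions $t \mapsto \lambda t$ and $t \mapsto \xi t$ commute, $R_\lambda$ intertwines the $\mu_N$-actions. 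Therefore $[\mathcal{X}_{N,1}(g)] = [\alpha^{-1}(c^{-1})] = [\alpha^{-1}(1)] = [\mathcal{X}_{N,1}(h)]$, which is the claim. The only genuinely delicate point — and the one I expect to be the main obstacle in writing this out carefully — is the compatibility of $R_\lambda$ with the good $\hat\mu$-structure used to define the monodromic Grothendieck ring; once that is confirmed, the remainder is bookkeeping with orders and angular components. (This is essentially the standard scaling trick from Denef and Loeser \cite{Denef-LoeserBarca}.)
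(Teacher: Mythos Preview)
Your argument is correct: the scaling $R_\lambda$ with $\lambda^N=c=u(\mathbf 0)$ is a $\mu_N$-equivariant automorphism of the based jet space carrying $\alpha^{-1}(c^{-1})=\mathcal{X}_{N,1}(g)$ isomorphically onto $\alpha^{-1}(1)=\mathcal{X}_{N,1}(h)$, and the equivariance you worry about is immediate because both $R_\lambda$ and the $\mu_N$-action are substitutions $t\mapsto(\text{scalar})\,t$ and hence commute. So the ``delicate point'' you flag is not an obstacle.

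The paper's own proof is the single sentence that ``$\varphi\mapsto u\,\varphi$'' gives an equivariant identification. Taken literally this is opaque---a jet $\varphi\colon(\text{disk})\to\CC^2$ cannot simply be multiplied by the scalar function $u$---and at best it is shorthand for a reparametrization that in the end depends only on $u(\mathbf 0)$. Your route is more explicit and arguably cleaner: you first observe that only the constant term $c$ of $u$ affects the angular component, so $\mathcal{X}_{N,1}(g)$ and $\mathcal{X}_{N,1}(h)$ are two fibers of one morphism $\alpha\colon\mathcal{X}_N(h)\to\CC^*$, and then you move between fibers by the standard Denef--Loeser scaling. The payoff is that the isomorphism is written down concretely and its equivariance is manifest; the paper's one-liner leaves both of these to the reader.
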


Lemma \ref{yalone} follows from a direct calculation. To prove Lemma 
\ref{unitinlr}, observe that if $g=uh$ then $\varphi \mapsto u \,\varphi$
gives an equivariant identification of $\mathcal{X}_{N, 1}(g)$ and
$\mathcal{X}_{N, 1}(h)$.
\par
To prove Theorem \ref{curvetheorem}, we
will use a pair of maps $\mathbb{C}^2_{v,w} \to \mathbb{C}^2_{x,y}$ defined as follows:
\begin{gather*}
\pi(v,w)= (v^m,v^n(1+w)),\\
\pi'(v,w)=(v,v^{mn}w),
\end{gather*}
and we will work with jets in $\mathcal{L}_{N}(\mathbb{C}^2_{v,w})_\mathbf{0}$.

\begin{lemma} \label{locunit}
The quotient $\frac{f' \circ \pi'}{f \circ \pi}$ is a unit in the local ring at the origin.
Thus $[\mathcal{X}_{N, 1}(f \circ \pi)]=[\mathcal{X}_{N, 1}(f' \circ \pi')]$ for each $N$.
\end{lemma}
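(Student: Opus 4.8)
The plan is to verify the claimed unit property by an explicit substitution and comparison of leading terms, after which Lemma~\ref{unitinlr} (applied in the coordinates $(v,w)$) delivers the equality of jet-space classes for free. First I would compute $f\circ\pi$. Using the product form $f(x,y)=\prod(y-\zeta)$ and the substitution $(x,y)=(v^m,v^n(1+w))$, each factor becomes $v^n(1+w)-\sum c_\mu v^{m\mu}$; factoring out $v^n$ from the $\mu_1$-term (recall $c_{\mu_1}=1$ and $m\mu_1=n$) gives $v^n\bigl(w+1-1-\sum_{\mu>\mu_1}c_\mu v^{m\mu-n}\bigr)=v^n\bigl(w-\sum_{\mu>\mu_1}c_\mu v^{m\mu-n}\bigr)$ for the factor corresponding to the ``trivial'' conjugate. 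For the other conjugates the $\mu_1$-coefficient is a nontrivial $m$-th root of unity, so $1+w$ minus it is a unit; there are $n-1 = m d' - d'$... — more precisely, among the $d=md'$ conjugates exactly $d'$ have leading coefficient $1$ after this substitution and the remaining $d-d'$ contribute unit factors. I would track the power of $v$ produced and the ``essential'' part: the $d'$ good factors collectively yield $v^{nd'}\prod(w - \text{(series in }v^{m(\mu-\mu_1)}))$, and comparing with the definition $\mu'=m(\mu-\mu_1+n)$ of the derived exponents shows this matches $f'$ evaluated appropriately.

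Next I would compute $f'\circ\pi'$. With $\pi'(v,w)=(v,v^{mn}w)$ and $f'(x,y)=\prod(y-\zeta')=\prod\bigl(y-\sum_{\mu>\mu_1}c_\mu x^{\mu'}\bigr)$, substitution gives $\prod\bigl(v^{mn}w - \sum_{\mu>\mu_1}c_\mu v^{\mu'}\bigr)$. Since $\mu'=m(\mu-\mu_1+n)=m(\mu-\mu_1)+mn$, each factor is $v^{mn}\bigl(w-\sum_{\mu>\mu_1}c_\mu v^{m(\mu-\mu_1)}\bigr)$, so $f'\circ\pi' = v^{mnd'}\prod\bigl(w-\sum_{\mu>\mu_1}c_\mu v^{m(\mu-\mu_1)}\bigr)$. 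The key observation is that the product appearing here is \emph{the same} product of series in $v$ and $w$ that appeared as the essential part of $f\circ\pi$ above. Therefore $\dfrac{f'\circ\pi'}{f\circ\pi}$ equals $v^{(mn - n)d'}$ times the reciprocal of the product of the $d-d'$ unit factors coming from the nontrivial roots of unity — and $v^{(mn-n)d'}$ is itself a unit times a power of $v$, which is \emph{not} a unit. So I expect that I have mismatched the powers and must instead confirm that the two expressions have \emph{equal} $v$-order, after which the quotient is genuinely a unit.

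The main obstacle, then, is bookkeeping the powers of $v$ correctly: showing $\ord_v(f\circ\pi)=\ord_v(f'\circ\pi')$ and that the two ``nonmonomial'' parts agree up to a unit. I would handle this by writing $f\circ\pi = v^{A}\cdot U(v,w)\cdot P(v,w)$ and $f'\circ\pi' = v^{B}\cdot P(v,w)$ where $P$ is the common product over the $d'$ essential conjugates (a polynomial in $w$ whose coefficients are units or series vanishing at $v=0$, with $P(0,w)=w^{d'}$ matching \eqref{fprimeproduct}), and $U(0,w)$ is a nonzero constant (product of the $d-d'$ differences of distinct roots of unity). Once $A=B$ is checked by counting — each of the $d-d'$ unit-type conjugates contributes $v^n$ to $f\circ\pi$ while each of the $d'$ essential conjugates contributes $v^n$ as well, giving $A = nd$, and the $d'$ factors of $f'\circ\pi'$ each contribute $v^{mn}$, giving $B = mnd' = nd$ — the quotient becomes $1/U(v,w)$, a unit at the origin. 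The second sentence of the lemma is then immediate from Lemma~\ref{unitinlr} with $g = f\circ\pi$, $h = f'\circ\pi'$, working in $\mathcal{L}_N(\mathbb{C}^2_{v,w})_\mathbf{0}$.
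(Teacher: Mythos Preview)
Your proposal is correct and follows essentially the same route as the paper: factor $v^n$ from each of the $d$ conjugate factors of $f\circ\pi$, observe that exactly $d'$ of them have parenthetical part $w-\sum_{\mu>\mu_1}c_\mu v^{m\mu-n}$ while the remaining $d-d'$ are units (the $\mu_1$-coefficient being a nontrivial $m$th root of unity), and then match this against $f'\circ\pi'=v^{mnd'}\prod(w-\sum c_\mu v^{\mu'-mn})$ using $\mu'-mn=m\mu-n$ and $nd=mnd'$. The paper's write-up is more streamlined in that it pulls $v^{nd}$ out of all $d$ factors at once rather than first isolating $v^{nd'}$ and then backtracking, but the substance is identical.
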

\begin{proof}
\par
Looking at the product in (\ref{fproduct}) and composing
with $\pi$, we observe that we may write
\begin{equation*}
\begin{aligned}
y-\zeta&=v^n(1+w)-\left((v^m)^{n/m}+\sum_{\mu>\mu_1} c_{\mu}(v^m)^{\mu}\right) \\
&=v^n \left(1+w-1^{n/m}-\sum_{\mu>\mu_1} c_{\mu}v^{m\mu-n}
\right).
\end{aligned}
\end{equation*}
The interpretation of $1^{n/m}$ depends on the choice of conjugate;
each possible $m$th root of unity occurs equally often.
Thus in $d'$ of the factors the interpretation is that $1^{n/m}=1$.
In the remaining factors $1^{n/m}$ is some other $m$th root of unity,
and within each of these factors the subfactor within parentheses is a unit.
The full product $f \circ \pi(v,w)$ is thus a unit times
$$
v^{nd} \prod (w-\sum_{\mu>\mu_1} c_{\mu}v^{m\mu-n}),
$$
where the product is taken over all $d'$ conjugates.
\par
Now looking at the product in (\ref{fprimeproduct}) and composing
with $\pi'$, we see that
\begin{equation*}
\begin{aligned}
f' \circ \pi'(v,w)&=\prod (v^{mn}w-\sum_{\mu>\mu_1} c_{\mu}v^{\mu'}) \\
&=v^{mnd'} \prod (w-\sum_{\mu>\mu_1} c_{\mu}v^{\mu'-mn}),
\end{aligned}
\end{equation*}
precisely the same product over all $d'$ conjugates.
\end{proof}
\par
Given a jet $\rho \in \mathcal{L}_N(\mathbb{C}^2_{v,w})_\mathbf{0}$, its \emph{pushforward}
by $\pi$ is the jet $\pi \circ \rho \in \mathcal{L}_N(\mathbb{C}^2_{x,y})_\mathbf{0}$. 
Note that the pushforward of a jet in $\mathcal{X}_{N, 1}(f \circ \pi)$ is a jet in $\mathcal{X}_{N, 1}(f)$. 
The pushforward jets form a subspace which we denote by $\mathcal{X}^{\pi}_{N, 1}(f)$.
In the same way, we define three similar subspaces:
\begin{itemize}
\item
$\mathcal{X}^{\pi}_{N, 1}((f_1)^{d'})$ inside $\mathcal{X}_{N, 1}((f_1)^{d'})$,
\item
$\mathcal{X}^{\pi'}_{N, 1}(f')$ inside $\mathcal{X}_{N, 1}(f')$,
\item
$\mathcal{X}^{\pi'}_{N, 1}(y^{d'})$ inside $\mathcal{X}_{N, 1}(y^{d'})$.
\end{itemize}

\begin{lemma} \label{identpi}
Each jet in $\mathcal{X}^{\pi}_{N, 1}(f)$ is the pushforward of a unique jet in
$\mathcal{X}_{N, 1}(f \circ \pi)$.
Thus the map $\pi$ identifies the space $\mathcal{X}_{N, 1}(f \circ \pi)$ with the space 
$\mathcal{X}^{\pi}_{N, 1}(f)$.
It likewise identifies $\mathcal{X}_{N, 1}((f_1)^{d'} \circ \pi)$ with 
$\mathcal{X}^{\pi}_{N, 1}((f_1)^{d'})$.
The jet spaces $\mathcal{X}_{N, 1}(f) \setminus \mathcal{X}^{\pi}_{N, 1}(f)$ and
$\mathcal{X}_{N, 1}((f_1)^{d'}) \setminus \mathcal{X}^{\pi}_{N, 1}((f_1)^{d'})$ 
obtained by set-theoretic difference
are equal.
\end{lemma}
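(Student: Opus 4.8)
The plan is to dispose of the three assertions in turn, the last being where the real work lies.

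\emph{Uniqueness and the two identifications.} Suppose $\rho=(v,w)$ and $\rho'=(v',w')$ are jets based at the origin with $\pi\circ\rho=\pi\circ\rho'$, where $\rho$ (hence $\rho'$) lies in $\mathcal{X}_{N,1}(f\circ\pi)$. Then $v\not\equiv 0$, since otherwise $\pi\circ\rho$ is the constant jet at $\mathbf{0}$, on which $f$ vanishes, so $f\circ\pi\circ\rho$ could not have order $N$. Looking at the first coordinate, $v^m=v'^m$; since $v$ has positive order, comparing leading coefficients and then reasoning order by order gives $v'=\lambda v$ for a single $m$th root of unity $\lambda$. The second coordinate then gives $\lambda^{n}v^{n}(1+w')=v^{n}(1+w)$, hence $\lambda^{n}(1+w')=1+w$; evaluating at $t=0$ yields $\lambda^{n}=1$. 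As $\gcd(m,n)=1$, this forces $\lambda=1$, so $v'=v$ and $w'=w$. Thus the pushforward is injective on $\mathcal{X}_{N,1}(f\circ\pi)$, which is the first identification; and because this argument used nothing about $f$ beyond $v\not\equiv 0$ --- which equally holds for jets in $\mathcal{X}_{N,1}((f_1)^{d'}\circ\pi)$, since $(f_1)^{d'}\circ\pi=v^{mnd'}\bigl((1+w)^{m}-1\bigr)^{d'}$ --- the same reasoning identifies $\mathcal{X}_{N,1}((f_1)^{d'}\circ\pi)$ with $\mathcal{X}^{\pi}_{N,1}((f_1)^{d'})$.

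\emph{When is a jet a pushforward?} The point behind the last assertion is that whether a jet $\varphi=(x(t),y(t))$ based at the origin is a pushforward by $\pi$ is a property of $\varphi$ alone. With $a=\ord x$ and $b=\ord y$, I claim $\varphi$ is a pushforward exactly when $a=m\ell$ and $b=n\ell$ for a common integer $\ell\ge1$ and the leading coefficients satisfy $y_{n\ell}^{\,m}=x_{m\ell}^{\,n}$; call this condition $(T)$. If $\varphi=\pi\circ\rho$ with $\ord v=\ell$, reading off orders and leading terms gives $(T)$. Conversely, under $(T)$ one extracts the power-series $m$th root $v$ of $x$, choosing (uniquely, using $\gcd(m,n)=1$) the $m$th root of $x_{m\ell}$ for which $v_{\ell}^{\,n}=y_{n\ell}$, sets $1+w=y/v^{n}$, and truncates to obtain an $N$-jet $\rho$ with $\pi\circ\rho=\varphi$. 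It follows that inside $\mathcal{X}_{N,1}(f)$ the subset $\mathcal{X}^{\pi}_{N,1}(f)$ is precisely the set of jets satisfying $(T)$: if $\varphi=\pi\circ\rho$ and $f\circ\varphi=t^{N}+\cdots$, then $(f\circ\pi)\circ\rho=f\circ\varphi=t^{N}+\cdots$, so $\rho\in\mathcal{X}_{N,1}(f\circ\pi)$. The same holds inside $\mathcal{X}_{N,1}((f_1)^{d'})$. Hence the two set-theoretic differences are $\{\varphi\in\mathcal{X}_{N,1}(f):\neg(T)\}$ and $\{\varphi\in\mathcal{X}_{N,1}((f_1)^{d'}):\neg(T)\}$, and it remains to show that a jet violating $(T)$ lies in $\mathcal{X}_{N,1}(f)$ if and only if it lies in $\mathcal{X}_{N,1}((f_1)^{d'})$.

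\emph{A weight estimate.} Write $f=(f_1)^{d'}+g$, where $g$ is a sum of monomials $x^{i}y^{j}$ with $mi+nj>mnd'$. When $(T)$ fails, $f_1\circ\varphi=(y^{m}-x^{n})(x(t),y(t))$ has order exactly $\min(mb,na)$: this is immediate when $mb\ne na$, while when $mb=na$ the failure of $(T)$ says precisely that the leading coefficient $y_{b}^{\,m}-x_{a}^{\,n}$ is nonzero. So $(f_1)^{d'}\circ\varphi$ has order $d'\min(mb,na)$ with nonzero leading coefficient. Meanwhile every monomial $x^{i}y^{j}$ in $g$ satisfies
\[
ia+jb\ \ge\ \frac{(mi+nj)\,\min(mb,na)}{mn}\ >\ \frac{mnd'\,\min(mb,na)}{mn}\ =\ d'\min(mb,na),
\]
using $a\ge\min(mb,na)/n$ and $b\ge\min(mb,na)/m$ (the degenerate cases $x\equiv0$ or $y\equiv0$ being checked directly). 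Thus $g\circ\varphi$ has strictly larger order than $(f_1)^{d'}\circ\varphi$, so $f\circ\varphi$ and $(f_1)^{d'}\circ\varphi$ share the same order and the same leading coefficient; in particular one has the form $t^{N}+\cdots$ exactly when the other does. This yields the equality of the two difference sets.

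\emph{The main obstacle} is this last step: identifying $(T)$ as the exact criterion for being a pushforward, and then the weight estimate, which is the place where the description of $f$ as $(y^{m}-x^{n})^{d'}$ plus terms of strictly higher weight is what makes the argument work. The injectivity, by contrast, is a soft consequence of $\gcd(m,n)=1$.
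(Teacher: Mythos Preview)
Your proof is correct and follows essentially the same approach as the paper's: both characterize $\mathcal{X}^{\pi}_{N,1}(f)$ by the condition you call $(T)$ (the paper's equations (\ref{specialform}) and (\ref{cancellation})), and both argue that on the complement of $(T)$ the compositions $f\circ\varphi$ and $(f_1)^{d'}\circ\varphi$ share the same lowest-order term. The differences are cosmetic: you prove uniqueness by an abstract injectivity argument using $\gcd(m,n)=1$, whereas the paper writes down the explicit inverse $v_0=(x_0)^s/(y_0)^r$; and your weight estimate for the higher-order terms of $f$ is spelled out in more detail than the paper's one-line assertion.
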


\begin{proof}
We claim that
$\mathcal{X}^{\pi}_{N, 1}(f)$ consists of all jets in $\mathcal{X}_{N, 1}(f)$ of the form
\begin{equation}\label{specialform}
\varphi(t) = (x_0 t^{mp}+\dots,y_0 t^{np}+\dots),
\end{equation}
where $p$ is  a positive integer, with $x_0$ and $y_0$ being nonzero numbers satisfying  
\begin{equation}\label{cancellation}
(y_0)^m = (x_0)^n.
\end{equation}
Indeed, consider a jet $\rho$ in $\mathcal{X}_{N, 1}(f \circ \pi)$; suppose that
$\rho(t)=(v(t),w(t))$, where $v(t)=v_{0}t^{p}+\dots$, with $v_0\neq 0$. Then
$$
\pi \circ \rho(t)=((v_0)^{m}t^{mp}+\dots,(v_0)^{n}t^{np}+\dots).
$$
Conversely, given
$$
\varphi(t)=(x(t),y(t))=(x_0 t^{mp}+\dots,y_0 t^{np}+\dots)
$$
with $(y_0)^m = (x_0)^n$,
note that there is a unique number $v_0$ for which $x_0=(v_0)^m$ and $y_0=(v_0)^n$. Explicitly, letting $r$ and $s$ be the smallest positive integers for which
$$ \det\left[ \begin{array}{cc} 
	m  & n  \\
	r & s
\end{array} \right]=1,$$
we have $v_0=(x_0)^s/(y_0)^r$. Let $v(t)$ be the $m$th root of $x(t)$ with this leading coefficient, i.e.,
$$
v(t)=v_{0}t^p+\dots \quad \text{and} \quad (v(t))^m=x(t).
$$
Let $w(t)=y(t)/(v(t))^n-1$. Then $\rho(t)=(v(t),w(t))$ gives the unique jet whose pushforward is $\varphi$.
\par
The same argument applies with $(f_1)^{d'}$ in place of $f$, showing that 
$\mathcal{X}^{\pi}_{N, 1}((f_1)^{d'})$ consists of all jets in 
$\mathcal{X}_{N, 1}((f_1)^{d'})$ of the form specified by 
by equations
(\ref{specialform}) and (\ref{cancellation}),
and that each such jet is a pushforward in a unique way.
\par
Again consider a jet $\varphi \in \mathcal{X}^{\pi}_{N, 1}(f)$.
Equations
(\ref{specialform}) and (\ref{cancellation})
imply a cancellation:
there is no term of degree $mnpd'$, and thus we must
have $N>mnpd'$. The same remark applies to a jet in
$\mathcal{X}^{\pi}_{N, 1}((f_1)^{d'})$.
For a jet $\varphi \in \mathcal{X}_{N, 1}(f) \setminus \mathcal{X}^{\pi}_{N, 1}(f)$, 
however, the cancellation just described does not occur. 
This means that  
$(f \circ \varphi)(t)$ and $((f_1)^{d'} \circ \varphi)(t)$ have the same lowest-order term.
Thus the jet spaces $\mathcal{X}_{N, 1}(f) \setminus \mathcal{X}^{\pi}_{N, 1}(f)$ and
$\mathcal{X}_{N, 1}((f_1)^{d'}) \setminus \mathcal{X}^{\pi}_{N, 1}((f_1)^{d'})$ 
coincide.
\end{proof}

\begin{lemma} \label{identpiprime}
Each jet in $\mathcal{X}^{\pi'}_{N, 1}(f')$ is the pushforward of a unique jet in
$\mathcal{X}_{N, 1}(f' \circ \pi').$
Thus the map $\pi'$ identifies the space $\mathcal{X}_{N, 1}(f' \circ \pi')$
with the space $\mathcal{X}^{\pi'}_{N, 1}(f')$.
It likewise identifies 
$\mathcal{X}_{N, 1}(y^{d'} \circ \pi')$ with 
$\mathcal{X}^{\pi'}_{N, 1}(y^{d'})$.
The $\mathcal{X}_{N, 1}(f') \setminus \mathcal{X}^{\pi'}_{N, 1}(f')$ and
$\mathcal{X}_{N, 1}(y^{d'}) \setminus \mathcal{X}^{\pi'}_{N, 1}(y^{d'})$ are equal.
\end{lemma}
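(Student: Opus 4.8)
The plan is to run the argument of Lemma~\ref{identpi} once more, with $\pi'$ replacing $\pi$. The first step is an explicit description of $\mathcal{X}^{\pi'}_{N,1}(f')$: I claim it consists of exactly those jets $\varphi(t)=(x(t),y(t))$ in $\mathcal{X}_{N,1}(f')$ for which $x(t)\neq 0$ and $y(t)$ is divisible by $x(t)^{mn}$ with a quotient of positive order, equivalently $\ord_t y(t)>mn\cdot\ord_t x(t)$. This holds because $\pi'\circ\rho=(v(t),v(t)^{mn}w(t))$ for $\rho=(v(t),w(t))$: a pushforward retains its first coordinate, and its second coordinate has order $mn\cdot\ord_t v+\ord_t w>mn\cdot\ord_t v$. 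Conversely, given such a $\varphi$, the jet $\rho(t)=\bigl(x(t),\,y(t)/x(t)^{mn}\bigr)$ has positive-order coordinates and pushes forward to $\varphi$; since $f'\circ\pi'\circ\rho=f'\circ\varphi=t^{N}+\cdots$, it lies in $\mathcal{X}_{N,1}(f'\circ\pi')$, and it is the unique preimage because its first coordinate is forced to equal $x(t)$ and its second is then forced by $x(t)^{mn}w(t)\equiv y(t)$. Repeating this with $y^{d'}$ in place of $f'$ gives the identification of $\mathcal{X}_{N,1}(y^{d'}\circ\pi')$ with $\mathcal{X}^{\pi'}_{N,1}(y^{d'})$.

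It then remains to prove the assertion about set-theoretic differences: a jet $\varphi=(x(t),y(t))$ failing the divisibility condition above --- that is, $x(t)=0$, or else $x(t)\neq 0$ and $\ord_t y\le mn\cdot\ord_t x$ --- lies in $\mathcal{X}_{N,1}(f')$ if and only if it lies in $\mathcal{X}_{N,1}(y^{d'})$. Applying the product-equals-sum discussion of~(\ref{fproduct}) to the irreducible curve $C'$, we may write $f'(x,y)=y^{d'}+g(x,y)$, where $g$ is a sum of monomials $x^{a}y^{b}$ with $a\ge 1$, $b\le d'-1$, and weight $m_1'a+n_1'b\ge n_1'd'$; here $\wt(x)=m_1'$ and $\wt(y)=n_1'$ come from writing the first essential exponent $\mu_1'=m(\mu_2-\mu_1+n)$ of $C'$ as $n_1'/m_1'$ in lowest terms, so that $\mu_1'>mn$ (because $\mu_2>\mu_1$), and we use that $f'$ is monic of degree $d'$ in $y$ with $f'(0,y)=y^{d'}$. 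When $x(t)=0$ the claim is immediate, since then $f'\circ\varphi=y(t)^{d'}$. When $x(t)\neq 0$ and $\ord_t y\le mn\cdot\ord_t x$, the key estimate is that every monomial $x^{a}y^{b}$ of $g$ satisfies
\[
a\cdot\ord_t x+b\cdot\ord_t y\;>\;d'\cdot\ord_t y\;=\;\ord_t\!\bigl(y^{d'}\circ\varphi\bigr),
\]
so that $f'\circ\varphi$ and $y^{d'}\circ\varphi$ share their lowest-order term; comparing the three cases $d'\cdot\ord_t y$ less than, equal to, or greater than $N$ then shows the two membership conditions agree. Reading the same estimate for $y^{d'}$ shows conversely that a jet of $\mathcal{X}_{N,1}(y^{d'})$ failing the divisibility also lies in $\mathcal{X}_{N,1}(f')$, whence $\mathcal{X}_{N,1}(f')\setminus\mathcal{X}^{\pi'}_{N,1}(f')$ and $\mathcal{X}_{N,1}(y^{d'})\setminus\mathcal{X}^{\pi'}_{N,1}(y^{d'})$ coincide.

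The heart of the proof is the displayed estimate, and I expect its verification, together with the accompanying weight bookkeeping, to be the only real obstacle. Since each monomial of $g$ has $a\ge 1$ and $b\le d'-1$, the weight bound $m_1'a+n_1'b\ge n_1'd'$ gives $a\ge\mu_1'(d'-b)>mn(d'-b)$, hence $a\ge mn(d'-b)+1$; combined with $\ord_t y\le mn\cdot\ord_t x$ this yields
\[
a\cdot\ord_t x+b\cdot\ord_t y-d'\cdot\ord_t y\;\ge\;(d'-b)\bigl(mn\cdot\ord_t x-\ord_t y\bigr)+\ord_t x\;\ge\;\ord_t x\;>\;0 .
\]
The bookkeeping still to confirm is merely that $g$ is a sum of monomials of weight $\ge n_1'd'$ in which $x$ occurs --- in particular that the same-weight monomials arising from the binomial expansion of the leading form $(y^{m_1'}-x^{n_1'})^{d''}$ are included --- which follows from the presentation of $f'$ analogous to~(\ref{fproduct}); once this is in hand, the inequality above handles every such monomial uniformly.
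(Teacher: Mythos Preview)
Your overall architecture matches the paper's proof: characterize $\mathcal{X}^{\pi'}_{N,1}(f')$ by the order condition $\ord_t y>mn\cdot\ord_t x$ (with $x(t)\not\equiv 0$), invert $\pi'$ birationally to get uniqueness, and then show that on the complement the compositions $f'\circ\varphi$ and $y^{d'}\circ\varphi$ share their leading term. You are in fact slightly more careful than the paper in treating the stray case $x(t)\equiv 0$.

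There is, however, a genuine gap in your derivation of the key inequality. You assert that every monomial $x^a y^b$ of $g=f'-y^{d'}$ satisfies the weight bound $m_1'a+n_1'b\ge n_1'd'$, where $(m_1',n_1')$ comes from the first \emph{essential} exponent $\mu_1'$ of $C'$. This is not true in general: the Puiseux series $\zeta'$ can have exponents strictly smaller than $\mu_1'$, namely the integers $\mu'=m(\mu-\mu_1+n)$ arising from non-essential exponents $\mu\in\tfrac{1}{m}\ZZ$ of $\zeta$ lying between $\mu_1$ and $\mu_2$. (The paper's coordinate change removes integer exponents from $\zeta$, not from $\zeta'$.) For a concrete failure take $\zeta=x^{3/2}+x^{5/2}+x^{8/3}$: then $\zeta'=x^{8}+x^{25/3}$, $\mu_1'=25/3$, $d'=3$, and $f'(x,y)=(y-x^{8})^{3}-x^{25}$ contains the monomial $x^{8}y^{2}$ of weight $3\cdot 8+25\cdot 2=74<75=n_1'd'$. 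So your chain $a\ge\mu_1'(d'-b)>mn(d'-b)$ breaks at the first step.

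The repair is exactly what the paper does, and it bypasses the essential/non-essential distinction: simply observe that \emph{every} exponent $\mu'$ appearing in $\zeta'$ satisfies $\mu'>mn$ (since $\mu>\mu_1$ gives $\mu'=m(\mu-\mu_1+n)>mn$). Hence the coefficient of $y^{b}$ in $f'$, which is $\pm e_{d'-b}(\zeta'_1,\dots,\zeta'_{d'})$, has $x$-order strictly greater than $mn(d'-b)$, giving $a>mn(d'-b)$ directly for each monomial of $g$. Your displayed estimate then goes through unchanged. In the example above one checks $8>6$, $16>12$, $24>18$, $25>18$, as required.
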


\begin{proof}
We claim that
$\mathcal{X}^{\pi'}_{N, 1}(f')$ consists of all jets in $\mathcal{X}_{N, 1}(f')$ of the form
\begin{equation} \label{formofjet}
\varphi(t) =(x(t),y(t))= (x_0 t^{p}+\dots,y_0 t^{q}+\dots),
\end{equation}
(with $x_0\neq 0$ and $y_0\neq 0$), where $q>mnp$,
together with all jets in which $y(t)$ is identically zero.
Indeed, consider a jet $\rho(t)=(v(t),w(t))$ in $\mathcal{X}_{N, 1}(f' \circ \pi')$. Note that
$v(t)$ can't be identically zero, and write it as $v(t)=x_{0}t^p+\dots$. Then its pushforward has the required form. Conversely, given a jet $\varphi(t)=(x(t),y(t))$ of this form, compose with the birational inverse of $\pi'$, i.e., let $v(t)=x(t)$ and $w(t)=y(t)/(x(t))^{mn}$, to obtain the unique jet $\rho$ whose pushforward is $\varphi$.
The same argument applies with $y^{d'}$ in place of $f'$.
\par
Thus a jet $\varphi$ in
$\mathcal{X}_{N, 1}(f') \setminus \mathcal{X}^{\pi'}_{N, 1}(f')$
has the form shown in equation (\ref{formofjet}), with $q \leq mnp$,
and likewise for a jet in $\mathcal{X}_{N, 1}(y^{d'}) \setminus \mathcal{X}^{\pi'}_{N, 1}(y^{d'})$
If we expand the product in (\ref{fprimeproduct}) and then compose each term with 
such a jet, the leading term $y^{d'}$ has order $d'q$. 
Observe that each exponent $\mu'$ appearing in (\ref{derivedpuiseux}) is greater than $mn$.
Thus each other term in the expansion will have order greater than $d'q$ . Thus $f \circ \varphi$
and $y^{d'} \circ \varphi$ have the same leading term.
\end{proof}
\par
Recall that Lemma \ref{locunit} states this equation of motives:
\begin{itemize}
\item
$[\mathcal{X}_{N, 1}(f \circ \pi)]=[\mathcal{X}_{N, 1}(f' \circ \pi')]$
\end{itemize}
Lemma \ref{identpi} implies the following equations:
\begin{itemize}
\item
$[\mathcal{X}^{\pi}_{N, 1}(f)]=[\mathcal{X}_{N, 1}(f \circ \pi)]$
\item
$[\mathcal{X}^{\pi}_{N, 1}((f_1)^{d'})]=[\mathcal{X}_{N, 1}((f_1)^{d'} \circ \pi)]$
\item
$[\mathcal{X}_{N, 1}(f)]-[\mathcal{X}^{\pi}_{N, 1}(f)]
=[\mathcal{X}_{N, 1}((f_1)^{d'})]-[\mathcal{X}^{\pi}_{N, 1}((f_1)^{d'})]$
\end{itemize}
Lemma \ref{identpiprime} gives us the following equations:
\begin{itemize}
\item
$[\mathcal{X}^{\pi'}_{N, 1}(f')]=[\mathcal{X}_{N, 1}(f' \circ \pi')]$
\item
$[\mathcal{X}^{\pi'}_{N, 1}(y^{d'})]=[\mathcal{X}_{N, 1}(y^{d'} \circ \pi')]$
\item
$[\mathcal{X}_{N, 1}(f')]-[\mathcal{X}^{\pi'}_{N, 1}(f')]
=[\mathcal{X}_{N, 1}(y^{d'})] - [\mathcal{X}^{\pi'}_{N, 1}(y^{d'})]$
\end{itemize}
Thus
\begin{equation*}
\begin{aligned}
-[\mathcal{X}_{N, 1}(f)]
=&
-[\mathcal{X}_{N, 1}((f_1)^{d'})]
-[\mathcal{X}_{N, 1}(f')] \\
&+[\mathcal{X}_{N, 1}((f_1)^{d'} \circ \pi)]
+[\mathcal{X}_{N, 1}(y^{d'})] - [\mathcal{X}_{N, 1}(y^{d'} \circ \pi')].
\end{aligned}
\end{equation*}
\par
Using each term as coefficient in a power series $\sum a_{N}\LL^{-2N}T^N$
and taking the limit as $T$ approaches infinity, we obtain this equation:
\begin{equation}\label{assembled}
\begin{aligned}
S(f)
=\,& S((f_1)^{d'}) + S(f') \\
&
- S((f_1)^{d'} \circ \pi)) - S(y^{d'}) + S(y^{d'} \circ \pi')
\end{aligned}
\end{equation}
Both $(f_1)^{d'} \circ \pi(v,w)$ and $y^{d'} \circ \pi'(v,w)$
are $v^{mnd'}w$ times a unit; Lemma \ref{unitinlr} tells us 
that $S((f_1)^{d'} \circ \pi)=S(y^{d'} \circ \pi)$.
Thus by applying Lemma \ref{yalone}, we obtain the 
statement of
Theorem \ref{curvetheorem}.
\par


\section{Proof of Theorem \ref{basecase}} \label{basecaseproof}
To prove Theorem \ref{basecase},
we will again use the decomposition of the jet space 
$\mathcal{X}_{N, 1}((f_1)^{d'})$
into
$ \mathcal{X}^{\pi}_{N, 1}((f_1)^{d'})$
plus its complement.
The theorem is an immediate consequence of these two equations:
\begin{gather}
-\lim_{T\to\infty} \sum_{N=1}^{\infty}
[\mathcal{X}^{\pi}_{N, 1}((f_1)^{d'})]
\LL^{-2N}(f))T^N = - [\mu_{d'}](\LL-1) \label{pushcontribution}
\\
-\lim_{T\to\infty} \sum_{N=1}^{\infty}
[\mathcal{X}_{N, 1}((f_1)^{d'}) \setminus \mathcal{X}^{\pi}_{N, 1}((f_1)^{d'})]
\LL^{-2N}(f))T^N = [(f_1)^{d'}-1] \label{othercontribution}
\end{gather}
\par
To prove equation (\ref{pushcontribution}),
we first invoke
Lemma \ref{identpi} to replace 
$\mathcal{X}^{\pi}_{N, 1}((f_1)^{d'})$ by
$\mathcal{X}_{N, 1}((f_1)^{d'} \circ \pi)$,
and
remark that
$$
(f_1)^{d'} \circ \pi(u,v)= (v^{mn}(mw+\dots))^{d'}.
$$
Consider a jet $\rho$ in $[\mathcal{X}_{N, 1}((f_1)^{d'} \circ \pi)]$, and write it
as
$$
\rho(t)=(v_{p}t^p+\dots,w_{q}t^q+\dots),
$$
with nonzero $v_p$ and $w_q$. Then $N=d'(mnp+q)$ and  $(mv_{p}w_{q})^{d'}=1$.
Thus
$$
\begin{aligned}
\sum_{N=1}^{\infty} [\mathcal{X}_{N, 1}((f_1)^{d'} \circ \pi)]&\LL^{-2N}T^N
=\sum_{p=1}^{\infty} \sum_{q=1}^{\infty}[\mu_d'](\LL-1)\LL^{N-p}\LL^{N-q}\LL^{-2N}T^N \\
&=[\mu_d'](\LL-1)\sum_{p=1}^{\infty} \LL^{-p}T^{d'mnp}
\sum_{q=1}^{\infty}\LL^{-q}T^{d'q} \\
&=[\mu_d'](\LL-1) \cdot\frac{\LL^{-1}T^{d'mn}}{1-\LL^{-1}T^{d'mn}}
\cdot\frac{\LL^{-1}T^{d'}}{1-\LL^{-1}T^{d'}},
\end{aligned}
$$
which yields the desired formula.
\par
To prove equation (\ref{othercontribution}),
we examine a jet 
$$
\varphi \in \mathcal{X}_{N, 1}((f_1)^{d'}) \setminus \mathcal{X}^{\pi}_{N, 1}((f_1)^{d'}).
$$
As we remarked in the proof of Lemma \ref{identpi},
for such a jet we do not see the cancellation
of terms implied by equations (\ref{specialform}) and (\ref{cancellation}).
\par
Suppose that the order of vanishing of $(f_1)^{d'}\circ \varphi$
is divisible by $md'$ but not $mnd'$.
Then $\varphi$ must have the following form:
$$
\varphi(t) =(x(t),y(t))= (x_0 t^{p}+\dots,y_0 t^{q}+\dots),
$$
where the following conditions are satisfied:
\begin{itemize}
\item
$q$ is not a multiple of $n$,
\item
$(y_0)^{md'}=1$,
\item
$p=\ceil{\frac{mq}{n}}$.
\end{itemize}
Note that $x_0$ may vanish; in fact $x(t)$ may even be identically zero.
The order of vanishing of $(f_1)^{d'}\circ \varphi$ is $N=mqd'$.
The motive associated to all such jets is
$$
[\mu_{md'}]\LL^{2mqd'-p-q+1},
$$
and thus its contribution to the sum in (\ref{othercontribution}) is
$$
[\mu_{md'}]\LL^{-p-q+1}T^{mqd'}.
$$
Observe that if we increase $q$ by $n$,
then $p$ increases by $m$
and the order of vanishing increases by $mnd'$.
Thus the contributions of these jet spaces
to the sum in (\ref{othercontribution}) may be packaged
as $n-1$ geometric series
with common ratio $r=\LL^{-(m+n)}T^{mnd'}$.
For each series, the leading term is determined 
by a value of $q$ that is less than $n$, and thus
the order of vanishing is less than $mnd'$.
Hence the contribution of this series to the limit
as $T \to \infty$ is zero.
A similar argument shows that there is no contribution
from jets for which the order of vanishing of $(f_1)^{d'}\circ \varphi$
is divisible by $nd'$ but not $mnd'$.
\par
It remains to analyze those jets
for which the order of vanishing $N$ of $(f_1)^{d'}\circ \varphi$
is divisible by $mnd'$.
Such a jet has the form given in equation (\ref{specialform}),
where
\begin{gather*}
N=mnpd' \\
\text{and } ((y_0)^m-(x_0)^n)^{d'}=1;
\end{gather*}
note that $x_0$ or $y_0$ may vanish.
The motive associated to such jets is
$$
[(f_1)^{d'}-1]
\LL^{2mnpd'-(m+n)p},
$$
and thus the total contribution of these jets to the sum
in equation (\ref{othercontribution}) is
\begin{equation*}
[(f_1)^{d'}-1]\sum_{p=1}^{\infty} \LL^{-(m+n)p}T^{mnpd'} 
=[(f_1)^{d'}-1]\frac{\LL^{-(m+n)T^{mnd'}}}{1-\LL^{-(m+n)T^{mnd'}}}.
\end{equation*}
Taking the limit as $T$ goes to infinity gives us $-[(f_1)^{d'}-1]$.
\par


\section{A recursive formula for the spectrum}\label{specrecursion}
Steenbrink \cite{Steenbrink-Asterisque} and Saito \cite{Saito-MA91} associate to a plane curve
singularity a multiset of rational numbers called its \emph{spectrum} --- it can be
regarded as an element of the group ring  $\ZZ[t^{1/\ZZ}]$.
As Steenbrink \cite{Steenbrink85} says, it ``gathers the information about the eigenvalues of the monodromy
operator \dots and about the Hodge filtration on the vanishing
cohomology." We briefly explain this construction.
\par
We have defined the motivic Milnor fiber $S(f)$ as 
an element of 
$\mathcal{M}_{\CC}^{\hat\mu}$;
the class $1-S(f)$ is called the \emph{motivic vanishing cycle}.
There is a natural map $\chi_h^{\mon}$, called the \emph{Hodge characteristic},
from $\mathcal{M}_{\CC}^{\hat\mu}$
to $K_0(HS^{\mon})$, the Grothendieck ring of mixed Hodge structures
with a finite automorphism.
For a variety $X$ with good ${\hat\mu}$-action,
we associate the sum
$$
\sum_i (-1)^{i} [H_c^i(X,\QQ)] \in K_0(HS^{\mon}).
$$
Here $H_c^i(X,\QQ)$ indicates the $i$th cohomology group with compact supports,
which is naturally endowed with a mixed Hodge structure and a finite automorphism.
This map passes to the localization; thus we have a linear map 
$$
\chi_h^{\mon}: \mathcal{M}_{\CC}^{\hat\mu}
\to K_0(HS^{\mon}).
$$
Steenbrink \cite{Steenbrink76} and Varchenko \cite{Varchenko}
showed that
one can define a natural mixed Hodge structure $\MHS(f)$
on the cohomology of the classical Milnor fiber, which can be regarded as an element
of $K_0(HS^{\mon})$. In Theorem 3.5.5 of \cite{Denef-LoeserBarca},
Denef and Loeser prove that
the Hodge characteristic of the motivic Milnor fiber $S(f)$ is $\MHS(f)$. 
\par
There is also a natural linear map from $K_0(HS^{\mon})$ to $\ZZ[t^{1/\ZZ}]$,
called the \emph{Hodge spectrum}.
For a mixed Hodge structure $H$, let $H_{\alpha}^{p,q}$ denote the 
eigenspace of $H^{p,q}$ associated to the eigenvalue $\exp(2\pi i \alpha)$.
We define 
$$
\hsp(H) = \sum_{\alpha \in \QQ \cap [0,1)}
t^{\alpha}
\,\sum t^p \dim(H_{\alpha}^{p,q}).
$$
Composing with the Hodge characteristic, we obtain a map 
$$
\Sp: \mathcal{M}_{\CC}^{\hat\mu} \to \ZZ[t^{1/\ZZ}].
$$
Denef and Loeser \cite{Denef-LoeserBarca} show that the spectrum of $f$
(as defined by Steenbrink \cite{Steenbrink-Asterisque} and Saito \cite{Saito-MA91})
can be computed using
$$
\spec(f) = \Sp(1-S(f)).
$$
\par
Applying $\Sp$ to the formula of Theorem 1, we obtain this recursive
formula for the spectrum:
\begin{equation} \label{spectrumrecursion1}
\spec(f) = \spec((f_1)^{d'}) + \spec(f') + \Sp([\mu_{d'}] -1).
\end{equation}
As an alternative, here is the result of applying $\Sp$
to formula (\ref{combined}):
\begin{equation} \label{spectrumrecursion2}
\spec(f) =
-\Sp([(f_1)^{d'}-1]) + \Sp([\mu_{d'}]\LL) 
+ \spec(f').
\end{equation}
\par
To make this a usable recursion, we supplement formula (\ref{spectrumrecursion2})
with results from Guibert \cite{Guibert}.
In Lemme 3.4.2(ii), he tells us that
$$
\Sp([f_1-1]) = t - \frac{t^{1/m}-t}{1-t^{1/m}}\cdot \frac{t^{1/n}-t}{1-t^{1/n}}.
$$
Thus, as a multiset, it consists of $(m-1)(n-1)/2$
numbers in the interval $(0,1)$ and an equal number in the interval $(1,2)$
(all of these counting negatively), together with the number 1.
Furthermore we remark that the multiset is symmetric
with respect to reflection across the number 1.
We may  write $\Sp([f_1-1])$ as a sum 
$\Sp^{(0)}([f_1-1])+\Sp^{(1)}([f_1-1])$,
using first the terms with exponents between 0 and 1, and then
those with exponents greater than or equal to 1.
Guibert's Lemme 3.4.3 says that 
\begin{equation} \label{3.4.3}
\begin{aligned}
\Sp  ([(f_1)^{d'}&-1]) (t)
=\frac{1-t}{1-t^{1/d'}} \cdot \\
& \left(
\Sp^{(0)}([f_1-1])(t^{1/d'})
+
t^{1-1/d'}\Sp^{(1)}([f_1-1])(t^{1/d'})
\right).
\end{aligned}
\end{equation}
\par
We claim that the  term 
$\Sp([\mu_{d'}]\LL)$
in formula (\ref{spectrumrecursion2}) is
$$
\frac{1-t}{1-t^{1/d}}\cdot t.
$$
Indeed, the action on the product $\mu_{d'} \times \CC$
has as eigenvalues the $d'$-th roots of unity, and
the cohomology lives in $H^{1,1}$.
Now note that this term cancels some of the contributions
to formula (\ref{3.4.3});
this has the same effect as if the initial term $t$ had been omitted
from $\Sp^{(1)}([f_1-1])$.
Thus again the spectrum has reflectional symmetry.
The upshot is that the contribution of the first two terms on the left of 
(\ref{spectrumrecursion2}) to the spectrum of $f$ is obtained from
 the spectrum of $f_1$ by the following process: 
 \begin{itemize}
 \item
 Discard the spectral numbers greater than 1.
 \item
 Compress by a factor of $1/d'$, i.e., multiply each spectral number by this value.
 \item
 Take the union of $d'$ copies: use the first copy unaltered, add $1/d'$ to each
 spectral number to get the second copy, etc.
\item
 To obtain the remaining contributions in the interval $(1,2)$, reflect across 1.
 \end{itemize}
 \par
Finally we remark that formula (\ref{spectrumrecursion1}) and the auxiliary formulas
imply formula (6) of Theorem 2.3 in \cite{Kennedy-McEwan}, a recursion for the monodromy:
\begin{equation}\label{monorecursion}
\HH(t)=
\frac{\HH_1(t^{d'})\cdot \HH'(t)}
{t^{d'}-1}.
\end{equation}
To see this, observe that 
formula (\ref{3.4.3}) implies that the eigenvalues of
the monodromy for $(f_1)^{d'}$ are the $d'$th roots of the eigenvalues of 
the monodromy for $f_1$. 
Combining this with formula (\ref{spectrumrecursion1}) yields the result.

\section{An example}
\par
Let us consider Example 2.2 from \cite{Kennedy-McEwan}. We begin with the curve whose Puiseux expansion is
$$\zeta = x^{3/2} + x^{7/4} +x^{11/6}.$$
Then its truncation $f_1= y^2 -x^3$ is parametrized by $\zeta_1=x^{3/2}$, and its derived curve is parametrized by 
$$\zeta'=x^{13/2} + x^{20/3}.$$
Repeating this process, we obtain a truncation $f'_1= y^2 - x^{13}$, parametri\-zed by $\zeta'_1=x^{13/2}$ and a second derived curve $f''=y^3 - x^{79}$ with parametrization 
$$\zeta''=x^{79/3}.$$
The number of conjugates of the curve, its truncation, the derived curve, the second truncation, and the second derived curve are $d=1$, $d_1=2$, $d'=6$, $d'_1=2$, and $d''=3$. 
\par
Applying our recursive formula (\ref{combined}) repeatedly, the motivic Milnor fiber $S(f)$ equals 
\begin{equation*}
[(y^2-x^3)^6-1] + [(y^2-x^{13})^3-1] + [y^3-x^{79}-1]
- [\mu_6]\LL - [\mu_3]\LL - \LL + 1.
\end{equation*}
\par
Before computing the spectrum, we remark that formula (\ref{monorecursion})
implies a recursion for the Euler
characteristic of the Milnor fiber (formula (4) of the same cited theorem).
Applying that formula here, we infer that the Milnor number is 204;
thus our recursion will yield 204 spectral numbers.
\par
To compute the spectrum of $f$, we first apply the process described in the previous section
to the two spectral numbers $\tfrac{5}{6}$ and $\tfrac{7}{6}$
of $f_1=y^2-x^3$. Using just the spectral number $\tfrac{5}{6}$,
compression and copying gives
these six spectral numbers in the interval $(0,1)$:
$$\tfrac{5}{36}, \tfrac{11}{36}, \tfrac{17}{36}, \tfrac{23}{36}, \tfrac{29}{36}, \tfrac{35}{36}.$$
Similarly, the spectral numbers
$
\tfrac{15}{26}, \tfrac{17}{26}, \tfrac{19}{26}, \tfrac{21}{26}, \tfrac{23}{26}, \tfrac{25}{26}
$
of $f'_1=y^2-x^{13}$
give us eighteen additional spectral numbers of $f$:
$$
\tfrac{15}{78}, \dots \tfrac{25}{78}, \quad \tfrac{41}{78}, \dots \tfrac{51}{78}, \quad \tfrac{67}{78}, \dots \tfrac{77}{78}.
$$
Likewise there are contributions coming from the $78$ spectral numbers of $y^3-x^{79}$ which lie
in the interval $(0,1)$; these contributions are
$$\tfrac{82}{237}, \tfrac{85}{237}, \tfrac{88}{237}, \tfrac{91}{237}, \dots, \tfrac{235}{237}$$
and  
$$\tfrac{161}{237}, \tfrac{164}{237}, \tfrac{167}{237}, \tfrac{170}{237}, \dots, \tfrac{236}{237}.$$
In total this gives 102 spectral numbers less than 1; after reflection we obtain an additional 102 values,
which together account for the Milnor number.


\bibliographystyle{amsplain}

\providecommand{\bysame}{\leavevmode\hbox to3em{\hrulefill}\thinspace}
\providecommand{\MR}{\relax\ifhmode\unskip\space\fi MR }
\providecommand{\MRhref}[2]{%
  \href{http://www.ams.org/mathscinet-getitem?mr=#1}{#2}
}
\providecommand{\href}[2]{#2}

\end{document}